\newcommand\reallywidehat[1]{%
\savestack{\tmpbox}{\stretchto{%
  \scaleto{%
    \scalerel*[\widthof{\ensuremath{#1}}]{\kern-.6pt\bigwedge\kern-.6pt}%
    {\rule[-\textheight/2]{1ex}{\textheight}}
  }{\textheight}%
}{0.5ex}}%
\stackon[1pt]{#1}{\tmpbox}%
}
\DeclareMathOperator*{\argmin}{argmin}
\DeclareMathOperator*{\sgn}{\mbox{sgn}}
\newtheorem{theorem}{Theorem}
\newtheorem{corollary}{Corollary}
\newtheorem{remark}{\textit{Remark}}
\title{$l_p$ regularization for ensemble Kalman inversion}
\author[1]{Yoonsang Lee\thanks{yoonsang.lee@dartmouth.edu}}
\affil[1]{Department of Mathematics, Dartmouth College}
\begin{document}

\maketitle

\begin{abstract}
Ensemble Kalman inversion (EKI) is a derivative-free optimization method that lies between the deterministic and the probabilistic approaches for inverse problems. EKI iterates the Kalman update of ensemble-based Kalman filters, whose ensemble converges to a minimizer of an objective function. EKI regularizes ill-posed problems by restricting the ensemble to the linear span of the initial ensemble, or by iterating regularization with early stopping. Another regularization approach for EKI, Tikhonov EKI, penalizes the objective function using the $l_2$ penalty term, preventing overfitting in the standard EKI.
This paper proposes a strategy to implement $l_p, 0<p\leq 1,$ regularization for EKI to recover sparse structures in the solution. The strategy transforms a $l_p$ problem into a $l_2$ problem, which is then solved by Tikhonov EKI.
The transformation is explicit, and thus the proposed approach has a computational cost comparable to Tikhonov EKI. We validate the proposed approach's effectiveness and robustness through a suite of numerical experiments, including compressive sensing and subsurface flow inverse problems.
\end{abstract}

\section{Introduction}
A wide range of problems in science and engineering are formulated as inverse problems.
Inverse problems aim to estimate a quantity of interest from noisy, imperfect observation or measurement data, such as state variables or a set of parameters that constitute a forward model.
Examples include deblurring and denoising in image processing \cite{CS},  recovery of permeability in subsurface flow using pressure fields \cite{OIL}, and training a neural network in machine learning \cite{NN,MLEKI} to name a few. In this paper, we consider the inverse problem of finding $u\in\mathbb{R}^N$ from measurement data $y\in\mathbb{R}^m$ where $u$ and $y$ are related as follows
\begin{equation}\label{eq:inverse}
y=G(u)+\eta.
\end{equation}
Here $G:\mathbb{R}^N\to\mathbb{R}^m$ is a forward model that can be nonlinear and computationally expensive to solve, for example, solving a PDE problem. The last term $\eta$ is a measurement error. The measurement error is unknown in general, but we assume that it is drawn from a known probability distribution, a Gaussian distribution with mean zero and a known covariance $\Gamma$. By assuming that the forward model $G$ and the observation covariance $\Gamma$ are known, the unknown variable $u$ is estimated by solving an optimization problem
\begin{equation}\label{eq:optimization}
\argmin_{u\in \mathbb{R}^N} \frac{1}{2}\|y-G(u)\|^2_{\Gamma},
\end{equation}
where $\|\cdot\|_{\Gamma}$ is the norm induced from the inner product using the inverse of the covariance matrix $\Gamma$, that is $\|a\|_{\Gamma}^2=\langle a,\Gamma^{-1}a\rangle$ for the standard inner product $\langle,\rangle$ in $\mathbb{R}^m$.

Ensemble Kalman inversion (EKI), pioneered in the oil industry \cite{OIL} and mathematically formulated in an application-neutral setting in \cite{EKI}, is a derivative-free method that lies between the deterministic and the probabilistic approaches for inverse problems.
EKI's key feature is an iterative application of the Kalman update of the ensemble-based Kalman filters \cite{EAKF, EnKF}.
Ensemble-based Kalman filters are well known for their success in numerical weather prediction, stringent inverse problems involving high-dimensional systems.
EKI iterates the ensemble-based Kalman update in which the ensemble mean converges to the solution of the optimization problem \cref{eq:optimization}.
EKI can be thought of as a least-squares method in which the derivatives are approximated from an empirical correlation of an ensemble \cite{statisticalderivative}, not from a variational approach. Thus, EKI is highly parallelizable without calculating the derivatives related to the forward or the adjoint problem used in the gradient-based methods.

Inverse problems are often ill-posed, which suffer from non-uniqueness of the solution and lack stability.
Also, in the context of regression, the solution can show overfitting.
A common strategy to overcome ill-posed problems is regularizing the solution of the optimization problem \cite{regularization}.
That is, a special structure of the solution from prior information, such as sparsity, is imposed to address ill-posedness.
The standard EKI \cite{EKI} implements regularization by restricting the ensemble to the linear span of the initial ensemble reflecting prior information.
The ensemble-based Kalman update is known for that the ensemble remains in the linear span of the initial ensemble \cite{span,EKI}.
Thus, the EKI ensemble always stays in the linear span of the initial ensemble, which regularizes the solution.
Although this approach shows robust results in certain applications, numerical evidence demonstrates that overfitting may still occur \cite{EKI}.
As an effort to address the overfitting of the standard EKI, an iterative regularization method has been proposed in \cite{iterativeregularization}, which approximates the regularizing Levenberg-Marquardt scheme \cite{LM}.
As another regularization approach using a penalty term to the objective function, a recent work called Tikhonov EKI (TEKI) \cite{TEKI} implements the Tikhonov regularization (which imposes a $l_2$ penalty term to the objective function) using an augmented measurement model that adds artificial measurements to the original measurement.
TEKI's implementation is a straightforward modification of the standard EKI method with a marginal increase in the computational cost.

The regularization methods for EKI mentioned above address several issues of ill-posed problems, including overfitting. However, it is still an open problem to implement other types of regularizers, such as $l_1$ or total variation (TV) regularization.
This paper aims to implement $l_p, 0<p\leq 1$, regularization to recover sparse structures in the solution of inverse problems.
In other words, we propose a highly-parallelizable derivative-free method that solves the following $l_p$ regularized optimization problem
\begin{equation}\label{eq:lpinu}
\argmin_{u\in X}\frac{\lambda}{2}\|u\|_p^p+\frac{1}{2}\|y-G(u)\|^2_{\Gamma},
\end{equation}
where $\|u\|_p$ is the $l_p$ norm of u, i.e., $\sum_i^N |u_i|^p$, and $\lambda$ is a regularization coefficient.
The proposed method's key idea is a transformation of variables that converts the $l_p$ regularization problem to the Tikhonov regularization problem. Therefore, a local minimizer of the original $l_p$ problem can be found by a local minimizer of the $l_2$ problem that is solved using the idea of Tikhonov EKI.
As this transformation is explicit and easy to calculate, the proposed method's overall computational complexity remains comparable to the complexity of Tikhonov EKI.
In general, a transformed optimization problem can lead to additional difficulties, such as change of convexity, increased nonlinearity, additional/missing local minima of the original problem, etc. \cite{practicaloptimization}.
We show that the transformation does not add or remove local minimizers in the transformed formulation. A work imposing sparsity in EKI has been reported recently \cite{sparseEKI}. The idea of this work is to use thresholding and a $l_1$ constraint to impose sparsity in the inverse problem solution. The $l_1$ constraint is further relaxed by splitting the solution into positive and negative parts. The split converts the $l_1$ problem to a quadratic problem, while it still has a non-negativity constraint. On the other hand, our method does not require additional constraints by reformulating the optimization problem and works as a solver for the $l_p$ regularized optimization problem \cref{eq:lpinu}.

This paper is structured as follows. \Cref{sec:EKI}  reviews the standard EKI and Tikhonov EKI.
In \cref{sec:lpEKI}, we describe a transformation that converts the $l_p$ regularization problem \cref{eq:lpinu}, $0<p\leq 1$, to the Tikhonov (that is, $l_2$) regularization problem, and provide the complete description of the $l_p$ regularized EKI algorithm. We also discuss implementation and computation issues.
\Cref{sec:tests} is devoted to the validation of the effectiveness and robustness of regularized EKI through a suite of numerical tests. The tests include a scalar toy problem with an analytic solution, a compressive sensing problem to benchmark with a convex $l_1$ minimization method, and a PDE-constrained nonlinear inverse problem from subsurface flow. We conclude this paper in \cref{sec:conclusion}, discussing the proposed method's limitations and future work.

\section{Ensemble Kalman inversion}\label{sec:EKI}
The $l_p$ regularized EKI uses a change of variables to transform a $l_p$ problem into a $l_2$ problem, which is then solved by the standard EKI using an augmented measurement model.
This section reviews the standard EKI and the application of the augmented measurement model in Tikhonov EKI to implement $l_2$ regularization. The review is intended to be concise, delivering the minimal ideas for the $l_p$ regularized EKI. Detailed descriptions of the standard EKI and the Tikhonov EKI methods can be found in \cite{EKI} and \cite{TEKI}, respectively.

\subsection{Standard ensemble Kalman inversion}\label{subsec:EKI}
EKI incorporates an artificial dynamics, which corresponds to the application of the forward model to each ensemble member. This application moves each ensemble member to the measurement space, which is then updated using the ensemble Kalman update formula. 
The ensemble updated by EKI stays in the linear span of the initial ensemble \cite{EKI, span}. Therefore, by choosing an initial ensemble appropriately for prior information, EKI is regularized as the ensemble is restricted to the linear span of the initial ensemble.
Under a continuous-time limit, when the operator $G$ is linear, it is proved in \cite{EKIanalysis} that EKI estimate converges to the solution of the following optimization problem
\begin{equation}
\argmin_{u\in\mathbb{R}^N}\frac{1}{2}\|y-G(u)\|_{\Gamma}^2.
\end{equation}
In this paper, we consider the discrete-time EKI in \cite{EKI}, which is described below.


\textbf{Algorithm: standard EKI}\\
Assumption: an initial ensemble of size $K$, $\{u_0^{(k)}\}_{k=1}^K$ from prior information, is given.\\
For $n=1,2,...,$
\begin{enumerate}
	\item Prediction step using the artificial dynamics:
	\begin{enumerate}
		\item Apply the forward model $G$ to each ensemble member 
\begin{equation}
g_n^{(k)}:=G(u_{n-1}^{(k)})
\end{equation}
		\item From the set of the predictions $\{g_n^{(k)}\}_{k=1}^K$, calculate the mean and covariances
\begin{equation}\label{eq:samplemean}
\overline{g}_n=\frac{1}{K}\sum_{k=1}^Kg_n^{(k)},
\end{equation}
\begin{equation}\label{eq:samplecovariance}
\begin{split}
C^{ug}_n&=\frac{1}{K}\sum_{k=1}^K(u_n^{(k)}-\overline{u}_n)\otimes(g_n^{(k)}-\overline{g}_n),\\
C^{gg}_n&=\frac{1}{K}\sum_{k=1}^K(g_n^{(k)}-\overline{g}_n)\otimes(g_n^{(k)}-\overline{g}_n),
\end{split}
\end{equation}
where $\overline{u}_n$ is the mean of $\{u_n^{(k)}\}$, i.e., $\displaystyle\frac{1}{K}\sum_{k=1}^Ku_n^{(k)}$.
	\end{enumerate}

\item Analysis step:
	\begin{enumerate}
		\item Update each ensemble member $u_n^{(k)}$ using the Kalman update
\begin{equation}\label{eq:ensembleupdate}
u_{n+1}^{(k)}=u_{n}^{(k)}+C^{ug}_n(C^{gg}_n+\Gamma)^{-1}(y_{n}^{(k)}-g_n^{(k)}),
\end{equation}
where $y_{n+1}^{(k)}=y+\zeta_{n+1}^{(k)}$ is a perturbed measurement using Gaussian noise $\zeta_{n+1}^{(k)}$ with mean zero and covariance $\Gamma$.

		\item Compute the mean of the ensemble as an estimate for the solution
		\begin{equation}
		\overline{u}_{n+1}=\frac{1}{K}\sum_{k=1}^Ku_n^{(k)}
		\end{equation}
	\end{enumerate}
\end{enumerate}

\begin{remark}
The term $C^{ug}_n(C^{gg}_n+\Gamma)^{-1}$ in \cref{eq:ensembleupdate} is from the Kalman gain matrix. The standard EKI uses an extended space, $(u,G(u))\in\mathbb{R}^{N+m}$, and then use the Kalman update for the extended space variable. However, as we need to update only $u$ while $G(u)$ is subordinate to $u$, we have the update formula \cref{eq:ensembleupdate}.
\end{remark}

\subsection{Tikhonov ensemble Kalman inversion}\label{subsec:TKEI}
EKI is regularized through the initial ensemble reflecting prior information. However, there are several numerical evidence showing that EKI regularized only through an ensemble may have overfitting \cite{EKI}.
Among other approaches to regularize EKI, Tikhonov EKI \cite{TEKI} uses the idea of an augmented measurement to implement $l_2$ regularization, which is a simple modification of the standard EKI.
For the original measurement $y$, the augmented measurement model extends $y$ by adding the zero vector in $\mathbb{R}^N$, which yields an augmented measurement vector $z\in\mathbb{R}^{m+N}$
\begin{equation}\label{eq:augmentedmeasurement}
\mbox{augmented measurement vector: }z=(y,0).
\end{equation}
The forward model is also augmented to account for the augmented measurement vector, which adds the identity measurement
\begin{equation}\label{eq:augmentedforward}
\mbox{augmented forward model: }F(u)=(G(u), u).
\end{equation}
Using the augmented measurement vector and the model, Tikhonov EKI has the following inverse problem of estimating $u$ from $z$
\begin{equation}
z=F(u)+\zeta.
\end{equation}
Here $\zeta$ is a $m+N$-dimensional measurement error for the augmented measurement model, which is Gaussian with mean zero and covariance
\begin{equation}\label{eq:augmentedcovariance}
\Sigma=\begin{pmatrix}\Gamma&0\\0&\frac{1}{\lambda}I_{N}\end{pmatrix},
\end{equation}
for the $N\times N$ identity matrix $I_N$.

The mechanism enabling the $l_2$ regularization in Tikhonov EKI is the incorporation of the $l_2$ penalty term as a part of the augmented measurement model. From the orthogonality between different components in $\mathbb{R}^{m+N}$, we have
\begin{equation}
\begin{split}
\frac{1}{2}\|z-F(u)\|^2_{\Sigma}&=\frac{1}{2}\|y-G(u)\|^2_{\Gamma}+\frac{1}{2}\|0-u\|^2_{\frac{1}{\lambda}I_N}\\
&=\frac{1}{2}\|y-G(u)\|^2_{\Gamma}+\frac{\lambda}{2}\|u\|^2_2.\\
\end{split}
\end{equation}
Therefore, the standard EKI algorithm applied to the augmented measurement minimizes $\frac{1}{2}\|z-F(u)\|^2_{\Sigma}$, which equivalently minimizes the $l_2$ regularized problem.

\section{$l_p$-regularization for EKI}\label{sec:lpEKI}
This section describes a transformation that converts a $l_p, 0<p\leq 1,$ regularization problem to a $l_2$ regularization problem. $l_p$-regularized EKI ($l_p$EKI), which we completely describe in \cref{subsec:lpEKI}, utilizes this transformation and solves the transformed $l_2$ regularization problem using the idea of Tikhonov EKI \cite{TEKI}, the augmented measurement model.

\subsection{Transformation of $l_p$ regularization into $l_2$ regularization}\label{subsec:transformation}
For $0<p\leq 1$, we define a function $\psi:\mathbb{R}\to\mathbb{R}$ given by
\begin{equation}\label{eq:utovcomponent}
\psi(x)=\sgn(x)|x|^{\frac{p}{2}}, \quad x\in\mathbb{R}.
\end{equation}
Here $\sgn(x)$ is the sign function of $x$, which has 1 for $x>0$, 0 for $x=0$, and -1 for $x<0$. It is straightforward to check that $\psi$ is bijective and has an inverse $\xi:\mathbb{R}\to\mathbb{R}$ defined as
\begin{equation}\label{eq:vtoucomponent}
\xi(x)=\sgn(x)|x|^{\frac{2}{p}}, \quad x\in\mathbb{R}.
\end{equation}
For $u$ in $\mathbb{R}^N$, we define a nonlinear map $\Psi:\mathbb{R}^N\to\mathbb{R}^N$, which applies $\psi$ to each component of $u=(u_1,u_2,...,u_N)$,\begin{equation}\label{eq:utov}
\Psi(u)=(\psi(u_1),\psi(u_2),...,\psi(u_N)).
\end{equation}
As $\psi$ has an inverse, the map $\Psi$ also has an inverse, say $\Xi$
\begin{equation}\label{eq:vtou}
\Xi(u)=\Psi^{-1}(u)=(\xi(u_1),\xi(u_2),...,\xi(u_N)).
\end{equation}
For $v=\Psi(u)$, it can be checked that for each $i=1,2,...,N$,
\[|v_i|^2=|\psi(u_i)|^2=|u_i|^{p},\]
and thus we have the following norm relation
\begin{equation}\label{eq:normequivalence}
\|v\|_2^2=\|u\|_p^p.
\end{equation}
This relation shows that the map $v=\Psi(u)$ converts the $l_p$-regularized optimization problem in $u$ \cref{eq:lpinu} to a $l_2$ regularized problem in $v$,
\begin{equation}\label{eq:l2inv}
\argmin_{v\in \mathbb{R}^N}\frac{\lambda}{2}\|v\|_2^2+\frac{1}{2}\|y-\tilde{G}(v)\|^2_{\Gamma},
\end{equation}
where $\tilde{G}$ is the pullback of $G$ by $\Xi$
\begin{equation}
\tilde{G}=G\circ \Xi.
\end{equation}

A transformation between $l_1$ and $l_2$ regularization terms has already been used to solve an inverse problem in the Bayesian framework \cite{l1RTO}. In the context of the randomize-then-optimize framework \cite{RTO}, the method in \cite{l1RTO} draws a sample from a Gaussian distribution, which is then transformed to a Laplace distribution. As this method needs to match the corresponding densities of the variables (the original and the transformed variables) as random variables, the transformation involves calculations related to cumulative distribution functions. For the scalar case, $v\in\mathbb{R}$, the transformation from $l_2$ to $l_1$, denoted as $gl$, is given by
\begin{equation}\label{eq:l1rto}
gl(v)=-\sgn(v)\log\left(1-2\left|\phi(v)-\frac{1}{2}\right|\right).
\end{equation}
where $\phi(u)$ is the cumulative distribution function of the standard Gaussian distribution.
\cref{fig:transformations} shows the two transformations $\xi$ \cref{eq:vtoucomponent} and $gl$ \cref{eq:l1rto}; the former is based on the norm relation \cref{eq:normequivalence} and the latter is based on matching densities as random variables.
\begin{figure}[!ht]
\centering
\includegraphics[width=.5\textwidth]{./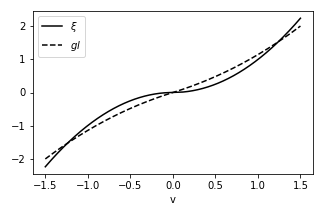}
\caption{$\xi$: transformation matching the norm relation \cref{eq:normequivalence}, $gl$: transformation from Gaussian to Laplace distributions.}\label{fig:transformations}
\end{figure}
We note that the transformation $\xi$ has a region around 0 flatter than the transformation $gl$, but $\xi$ diverts quickly as $v$ moves further away from $0$. From this comparison, we expect that the flattened region of $\xi$ plays another role in imposing sparsity by trapping the ensemble to the flattened area.


In general, a reformulation of an optimization problem using a transformation has the following potential issues \cite{practicaloptimization}: i) the degree of nonlinearity may be significantly increased, ii) the desired minimum may be inadvertently excluded, or iii) an additional local minimum can be included.
In \cite{EKIconvergence}, for a non-convex problem, it is shown that TEKI converges to an approximate local minimum if the gradient and Hessian of the objective function are bounded. It is straightforward to check that the transformed objective function has bounded gradient and Hessian if $0<p\leq 1$ regardless of the convexity of the problem. Therefore, if we can show that the original and the transformed problems have the same number of local minima, then it is guaranteed to find a local minimum of the original problem by finding a local minimum of the transformed problem using TEKI. We want to note the importance of the sign function in defining $\psi$ and $\xi$. The sign function is not necessary to satisfy the norm relation \cref{eq:normequivalence}, but it is essential to make the transformation $\Psi$ and its inverse $\Xi$ bijective. Without being bijective, the transformed $l_2$ problem can have more or less local minima than the original problem.

The following theorem shows that the transformation does not add or remove local minima.
\begin{theorem} For an objective function $J(u):\mathbb{R}^N\to\mathbb{R}$, if $u^*$ is a local minimizer of $J(u)$, $\Psi(u^*)$ is also a local minimizer of $\tilde{J}(v)=J\circ\Xi(v)$. Similarly, if $v^*$ is a local minimizer of $\tilde{J}(v)$, then $\Xi(v^*)$ is also a local minimizer of $J(u)=\tilde{J}\circ \Psi(u)$.
\end{theorem}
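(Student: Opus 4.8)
The plan is to recognize that local minimality is a purely topological notion—$u^*$ is a local minimizer of $J$ exactly when some open neighborhood $U$ of $u^*$ satisfies $J(u)\ge J(u^*)$ for all $u\in U$—and that the entire statement then follows from $\Psi$ and $\Xi$ being mutually inverse homeomorphisms of $\mathbb{R}^N$. No derivatives enter, which is fortunate: since $0<p\le 1$ the maps $\psi$ and $\xi$ fail to be differentiable at the origin (the exponents $p/2\le 1/2$ and $2/p\ge 2$ produce an infinite and a vanishing one-sided slope, respectively), so any argument based on critical points or Hessians would be awkward there.

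First I would verify that $\psi(x)=\sgn(x)|x|^{p/2}$ and $\xi(x)=\sgn(x)|x|^{2/p}$ are continuous on all of $\mathbb{R}$. Away from the origin this is immediate; at $x=0$ the one-sided limits of both functions equal $0=\psi(0)=\xi(0)$, so continuity holds there as well. Consequently the componentwise maps $\Psi$ and $\Xi$ are continuous on $\mathbb{R}^N$. Combined with the bijectivity already established in the text—where the sign function is precisely what makes $\psi$ and $\xi$ inverse bijections—this shows that $\Psi$ is a homeomorphism with continuous inverse $\Xi$, and that the two composition identities $\tilde J=J\circ\Xi$ and $J=\tilde J\circ\Psi$ both hold.

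For the forward direction, suppose $u^*$ is a local minimizer of $J$, with neighborhood $U$ on which $J\ge J(u^*)$, and set $v^*=\Psi(u^*)$, so that $\Xi(v^*)=u^*$. Using continuity of $\Xi$ at $v^*$, I would choose an open neighborhood $V$ of $v^*$ with $\Xi(V)\subseteq U$. Then for every $v\in V$ we have $\Xi(v)\in U$, hence $\tilde J(v)=J(\Xi(v))\ge J(u^*)=J(\Xi(v^*))=\tilde J(v^*)$, so $v^*$ is a local minimizer of $\tilde J$. The reverse direction is identical after interchanging the roles of $\Psi$ and $\Xi$ and invoking $J=\tilde J\circ\Psi$: given a local minimizer $v^*$ of $\tilde J$ with some neighborhood, continuity of $\Psi$ at $u^*=\Xi(v^*)$ yields a neighborhood of $u^*$ on which $J=\tilde J\circ\Psi\ge \tilde J(v^*)=J(u^*)$.

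I do not anticipate a genuine obstacle here; the only point demanding care is continuity at the origin, which is exactly where the sign function and the hypothesis $0<p\le 1$ do their work. Once that is in hand, the statement reduces to the standard fact that a homeomorphism carries local minimizers of $J$ bijectively onto local minimizers of $J$ composed with its inverse, and I would simply state the neighborhood inclusions $\Xi(V)\subseteq U$ and its counterpart explicitly rather than appealing to this fact abstractly.
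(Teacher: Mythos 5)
Your proposal is correct and follows essentially the same route as the paper: both arguments rest on $\Psi$ and $\Xi$ being mutually inverse continuous bijections, transfer the minimizing neighborhood through the map, and compare values via $\tilde J=J\circ\Xi$. Your version is marginally more careful in checking continuity at the origin and in phrasing the neighborhood step as $\Xi(V)\subseteq U$ rather than taking $\mathcal{M}=\Psi(\mathcal{N})$ directly, but these are cosmetic differences.
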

\begin{proof}
From the definition \cref{eq:utov} and \cref{eq:vtou}, $\Psi$ and $\Xi$ are continuous and bijective. Thus for $u\in\mathbb{R}^N$, both $\Psi$ and $\Xi$ map a neighborhood of $u\in\mathbb{R}^N$ to neighborhoods of $\Psi(u)$ and $\Xi(u)$, respectively. As $u^*$ is a local minimizer, there exists a neighborhood $\mathcal{N}$ of $u^*$ such that
\begin{equation}
J(u^*)\leq J(w)\quad \mbox{for all }w\in\mathcal{N}.
\end{equation}
Let $v=\Psi(u^*)$ and $\mathcal{M}:=\Psi(\mathcal{N})$ that is a neighborhood of $v$. For any $w\in\mathcal{M}$, $\Xi(w)\in\mathcal{N}$ and thus we have
\begin{equation}
\tilde{J}(v)=J(\Xi(v))=J(u)\leq J(\Xi(w))=\tilde{J}(w),
\end{equation}
which shows that $v$ is a local minimizer of $\tilde{J}$. The other direction is proved similarly by changing the roles of $\Psi$ and $\Xi$ and of $J$ and $\tilde{J}$.
\end{proof}
We note that an insolated local minimizer can replace the local minimizer in the theorem.
If there is a unique global minimizer of the $l_p$ regularization problem \cref{eq:lpinu}, the theorem guarantees that we can find it by finding the global minimizer of the $l_2$ regularized problem \cref{eq:l2inv}.
\begin{corollary}
For $0<p\leq 1$, if the $l_p$ regularized optimization \cref{eq:lpinu} has a unique global minimizer, say $u^{\dag}$, the $l_2$ regularized optimization \cref{eq:l2inv} also has a unique global minimizer. By finding the minimizer $u^{\dag}$ of \cref{eq:l2inv}, say $v^{\dag}$, $u^{\dag}$ is given by
\begin{equation}u^{\dag}=\Xi(v^{\dag}).
\end{equation}
\end{corollary}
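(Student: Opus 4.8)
The plan is to reduce the corollary to the single algebraic identity $\tilde{J}=J\circ\Xi$ and then exploit the fact that $\Xi$ is a bijection of all of $\mathbb{R}^N$. Write $J(u)=\frac{\lambda}{2}\|u\|_p^p+\frac{1}{2}\|y-G(u)\|_\Gamma^2$ for the objective in \cref{eq:lpinu} and $\tilde{J}(v)=\frac{\lambda}{2}\|v\|_2^2+\frac{1}{2}\|y-\tilde{G}(v)\|_\Gamma^2$ for the objective in \cref{eq:l2inv}. First I would verify that these two are related by the change of variables. Setting $u=\Xi(v)$, so that $v=\Psi(u)$, the norm relation \cref{eq:normequivalence} gives $\|v\|_2^2=\|\Xi(v)\|_p^p$, while $\tilde{G}=G\circ\Xi$ gives $\tilde{G}(v)=G(\Xi(v))$; substituting both terms shows $\tilde{J}(v)=J(\Xi(v))$, which is exactly the hypothesis of the preceding theorem.

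The key step is then a global, rather than merely local, argument. Because $\Xi:\mathbb{R}^N\to\mathbb{R}^N$ is onto, $\inf_{v}\tilde{J}(v)=\inf_{v}J(\Xi(v))=\inf_{u}J(u)$, so the two problems share the same infimum. Moreover, for any $v$ the value $\tilde{J}(v)$ equals this common infimum if and only if $J(\Xi(v))$ does; hence $v$ is a global minimizer of $\tilde{J}$ if and only if $\Xi(v)$ is a global minimizer of $J$. Composing with the inverse map $\Psi$, the global minimizers of the two problems are in one-to-one correspondence via $v\mapsto\Xi(v)$ and $u\mapsto\Psi(u)$.

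Finally I would invoke the uniqueness hypothesis. Since $J$ has the unique global minimizer $u^\dagger$, the correspondence above forces $\tilde{J}$ to have exactly one global minimizer, namely the unique solution $v^\dagger$ of $\Xi(v)=u^\dagger$, which is $v^\dagger=\Psi(u^\dagger)$; applying $\Xi$ and using $\Xi\circ\Psi=\mathrm{id}$ recovers $u^\dagger=\Xi(v^\dagger)$, as claimed. The main point to get right is not a computational obstacle but the distinction between the local statement of the theorem and the global statement needed here: the theorem only transports local minimizers, so I would not derive the corollary from it directly. Instead it is the equality of infima, available precisely because $\Xi$ is a \emph{global} bijection rather than just a local homeomorphism, that upgrades the correspondence to global minimizers and delivers uniqueness.
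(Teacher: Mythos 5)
Your proof is correct, and it is worth noting that it does not follow the paper's own route: the paper states the corollary without a separate proof, presenting it as an immediate consequence of the preceding theorem on \emph{local} minimizers (``the theorem guarantees that we can find it by finding the global minimizer of the $l_2$ regularized problem''). You correctly identify that the local statement does not by itself deliver the global claim, and you supply the missing ingredient: the identity $\tilde{J}=J\circ\Xi$ combined with surjectivity of $\Xi$ gives $\inf_v\tilde{J}(v)=\inf_u J(u)$, so global minimizers correspond bijectively under $v\mapsto\Xi(v)$, and injectivity of $\Xi$ then yields uniqueness of $v^\dagger=\Psi(u^\dagger)$. What the paper's implicit argument buys is brevity and a single unified mechanism (the theorem) for both local and global statements; what your argument buys is rigor at the global level --- in particular, uniqueness of the global minimizer of $\tilde{J}$ genuinely requires the injectivity of $\Xi$ and the equality of infima, neither of which is contained in the local-minimizer theorem. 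Your verification of $\tilde{J}(v)=J(\Xi(v))$ from the norm relation \cref{eq:normequivalence} and $\tilde{G}=G\circ\Xi$ is exactly the computation the paper relies on implicitly, so the two arguments rest on the same algebraic identity; the difference is only in how the minimizer correspondence is extracted from it.
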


\subsection{Algorithm}\label{subsec:lpEKI}
$l_p$-regularized EKI ($l_p$EKI) solves the transformed $l_2$ regularization problem using the standard EKI with the augmented measurement model. For the current study's completeness to implement $l_p$EKI, this subsection describes the complete $l_p$EKI algorithm and discuss issues related to implementation. Note that the Tikhonov EKI (TEKI) part in $l_p$EKI is slightly modified to reflect the setting assumed in this paper. The general TEKI algorithm and its variants can be found in \cite{TEKI}.

We assume that the forward model $G$ and the measurement error covariance $\Gamma$ are known, and measurement $y\in\mathbb{R}^m$ is given (and thus $z=(y,0)$ is also given). We also fix the regularization coefficient $\lambda$ and $p$. Under this assumption, $l_p$EKI uses the following iterative procedure to update the ensemble until the ensemble mean $\overline{v}=\displaystyle\frac{1}{K}\sum_{k=1}^Kv^{(k)}$ converges.

\vspace{0.02\textwidth}
\textbf{Algorithm:  $l_p$-regularized EKI}\\
Assumption: an initial ensemble of size $K$, $\{v_0^{(k)}\}_{k=1}^K$, is given.\\
For $n=1,2,...,$
\begin{enumerate}
	\item Prediction step using the forward model:
	\begin{enumerate}
		\item Apply the augmented forward model $F$ to each ensemble member
\begin{equation}
f_n^{(k)}:=F(v_n^{(k)})=(\tilde{G}(v_n^{(k)}), v_n^{(k)})
\end{equation}
		\item From the set of the predictions $\{f_n^{(k)}\}_{k=1}^K$, calculate the mean and covariances
\begin{equation}\label{eq:rEKI:samplemean}
\overline{f}_n=\frac{1}{K}\sum_{k=1}^Kf_n^{(k)},
\end{equation}
\begin{equation}\label{eq:rEKI:samplecovariance}
\begin{split}
C^{vf}_n&=\frac{1}{K}\sum_{k=1}^K(v_n^{(k)}-\overline{v}_n)\otimes(f_n^{(k)}-\overline{f}_n),\\
C^{ff}_n&=\frac{1}{K}\sum_{k=1}^K(f_n^{(k)}-\overline{f}_n)\otimes(f_n^{(k)}-\overline{f}_n)
\end{split}
\end{equation}
where $\overline{v}_n$ is the ensemble mean of $\{v_n^{(k)}\}$, i.e., $\displaystyle\frac{1}{K}\sum_{k=1}^Kv_n^{(k)}$.
	\end{enumerate}

\item Analysis step:
	\begin{enumerate}
		\item Update each ensemble member $v_n^{(k)}$ using the Kalman update
\begin{equation}\label{eq:rEKI:ensembleupdate}
v_{n+1}^{(k)}=v_{n}^{(k)}+C^{vf}_n(C^{ff}_n+\Sigma)^{-1}(z_{n+1}^{(k)}-f_n^{(k)}),
\end{equation}
where $z_{n+1}^{(k)}=z+\zeta_{n+1}^{(k)}$ is a perturbed measurement using Gaussian noise $\zeta_{n+1}^{(k)}$ with mean zero and covariance $\Sigma$.

		\item For the ensemble mean $\overline{v}_n$, the $l_p$EKI estimate, $u_n$, for the minimizer of the $l_p$ regularization is given by
	\begin{equation}\label{eq:rEKI:finalestimate}
	u = \Xi(\overline{v}_n).
	\end{equation}
	\end{enumerate}
\end{enumerate}

\begin{remark} In EKI and TEKI, the covariance of $\zeta_{n+1}^{(k)}$ can be set to zero so that all ensemble member uses the same measurement $z$ without perturbations. In our study, we focus on the perturbed measurement using the covariance matrix $\Gamma$.
\end{remark}

\begin{remark} The above algorithm is equivalent to TEKI, except that the forward model $G$ is replaced with the pullback of $G$ by the transformation $\Xi$. In comparison with TEKI, the additional computational cost for $l_p$EKI is to calculate the Transformation $\Xi(v)$. In comparison with the standard EKI, the additional cost of $l_p$EKI, in addition to the cost related to the transformation, is the matrix inversion $(C^{gg}_n+\Sigma)^{-1}$ in the augmented measurement space $\mathbb{R}^{m+N}$ instead of a matrix inversion in the original measurement space $\mathbb{R}^m$. As the covariance matrices are symmetric positive definite, the matrix inversion can be done efficiently.
\end{remark}

\begin{remark} In $l_p$EKI, it is also possible to consider estimating $u$ by transforming each ensemble member and take average of the transformed members, that is,
\begin{equation}\label{eq:rEKI:finalestimate2}
u=\frac{1}{K}\sum_{k=1}^K \Xi(v_n^{(k)})
\end{equation}
instead of \cref{eq:rEKI:finalestimate}. If the ensemble spread is large, these two approaches will make a difference. In our numerical tests in the next section, we do not incorporate covariance inflation. Thus the ensemble spread becomes relatively small when the estimate converges, and thus \cref{eq:rEKI:finalestimate} and \cref{eq:rEKI:finalestimate2} are not significantly different. In this study, we use \cref{eq:rEKI:finalestimate} to measure the performance of $l_p$EKI.
\end{remark}
 
In recovering sparsity using the $l_p$ penalty term, if the penalty term's convexity is not necessary, it is preferred to use a small $p<1$ as a smaller $p$ imposes stronger sparsity. The transformation in $l_p$EKI works for any positive $p$, but the transformation can lead to an overflow for a small $p$; the function $\xi$ depends on an exponent $\frac{2}{p}$ that becomes large for a small $p$. Therefore, there is a limit for the smallest $p$. In our numerical experiments in the next section, the smallest $p$ is 0.7 in the compressive sensing test.

There is a variant of $l_p$EKI worth further consideration. In \cite{EKIanalysis}, a continuous-time limit of EKI has been proposed, which rescales $\Gamma\to h^{-1}\Gamma$ using $h>0$ so that the matrix inversion $(C^{gg}_n+h^{-1}\Gamma)^{-1}$ is approximated by $h\Gamma^{-1}$ as a limit of $h\to 0$. In many applications, the measurement error covariance is assumed to be diagonal. That is, the measurement error corresponding to different components are uncorrelated. Thus the inversion $\Gamma^{-1}$ becomes a cheap calculation in the continuous-time limit. The continuous-time limit is then discretized in time using an explicit time integration method with a finite time step. The latter is called `learning rate' in the machine learning community, and it is known that an adaptive time-stepping to solve an optimization often shows improved results \cite{adaptivelearningrate,adaptivestepping}. The current study focuses on the discrete-time update described in \cref{eq:ensembleupdate} and we leave adaptive time-stepping for future work.

\section{Numerical tests}\label{sec:tests}
We apply $l_p$-regularized EKI  ($l_p$EKI) to a suite of inverse problems to check its performance in regularizing EKI and recovering sparse structures of solutions.
The tests include: i) a scalar toy model where an analytic solution is available, ii) a compressive sensing problem to recover a sparse signal from random measurements of the signal, iii) an inverse problem in subsurface flow; estimation of permeability from measurements of hydraulic pressure field whose forward model is described by a 2D elliptic partial differential equation \cite{Darcy, OIL}. In all tests, we run $l_p$EKI for various values of $p\leq 1$, and compare with the result of Tikhonov EKI. We analyze the results to check how effectively $l_p$EKI implements $l_p$ regularization and recover sparse solutions. When available, we also compare $l_p$EKI with a $l_1$ convex minimization method. As quantitative measures for the estimation performance, we calculate the $l_1$ error of the $l_p$EKI estimates and the data misfit $\|y-G(u)\|_{2}$.

Several parameters are to be determined in $l_p$EKI to achieve robust estimation results, the regularization coefficient $\lambda$, ensemble size, and its initialization. The regularization coefficient can be selected, for example, using cross-validation. As the coefficient can significantly affect the performance, we find the coefficient by hand-tuning so that $l_p$EKI achieves the best result for a given $p$. 
Ensemble initialization plays a role in regularizing EKI, restricting the estimate to the linear span of the initial ensemble.
In our experiments, instead of tuning the initial ensemble for improved results,
we initialize the ensemble using a Gaussian distribution with mean zero and a constant diagonal covariance matrix (the variance will be specified later for each test). As this initialization does not utilize any prior information, a sparse structure in the solution, we regularize the solution mainly through the $l_p$ penalty term.
For each test, we run 100 trials of $l_p$EKI through 100 realizations of the initial ensemble distribution and use the estimate averaged over the trials along with its standard deviation to measure the performance difference.

Regarding the ensemble size, for the scalar toy and the compressive sensing problems, we test ensemble sizes larger than the dimension of $u$, the unknown variable of interest. The purpose of a large ensemble size is to minimize the sampling error while we focus on the regularization effect of $l_p$EKI. To show the applicability of $l_p$EKI for high-dimensional problems, we also test a small ensemble size using the idea of multiple batches used in \cite{batch}. The multiple batch approach runs several batches where small magnitude components are removed after each batch. After removing small magnitude components from the previous batch, the ensemble is used for the next batch. The multiple batch approach enables a small ensemble size, 50 ensemble members, for the compressive sensing and the 2D elliptic inversion problems where the dimensions of $u$ are 200 and 400, respectively.

In ensemble-based Kalman filters, covariance inflation is an essential tool to stabilize and improve the performance of the filters. In a connection with the inflation, an adaptive time-stepping has been investigated to improve the performance of EKI. Although the adaptive time-stepping can be incorporated in $l_p$EKI for performance improvements, we use the discrete version $l_p$EKI described in \cref{subsec:lpEKI} focusing on the effect of different types of regularization on inversion. We will report a thorough investigation along the line of adaptive time-stepping in another place.

%
%
\subsection{A scalar toy problem}\label{subsec:scalar}
The first numerical test is a scalar problem for $u\in\mathbb{R}$ with an analytic solution.
As this is a scalar problem, there is no effect of regularization from ensemble initialization, and we can see the effect from the $l_p$ penalty term. The scalar optimization problem we consider here is the minimization of an objective function $J(u)=\frac{1}{4}|u|^p+\frac{1}{2}(1-u)^2$
\begin{equation}\label{eq:scalar}
\argmin_{u\in\mathbb{R}}J(u)=\argmin_{u\in\mathbb{R}}\frac{1}{4}|u|^p+\frac{1}{2}(1-u)^2.
\end{equation}
This setup is equivalent to solving the optimization problem \cref{eq:lpinu} using $l_p$ regularization with $\lambda=1/2$, where $y=1$, $G(u)=u$, and $\eta$ is Gaussian with mean zero and variance 1. Using the transformation $v=\Psi(u)=\psi(u)=\sgn(u)|u|^{\frac{p}{2}}$ defined in \cref{eq:utovcomponent}, $l_p$EKI minimizes a transformed objective function $\tilde{J}(v)=\frac{1}{4}|v|^2+\frac{1}{2}(1-\sgn(v)|v|^{2/p})^2$
\begin{equation}\label{eq:scalartransformed}
\argmin_{v\in\mathbb{R}}\tilde{J}(v)=\argmin_{v\in\mathbb{R}}\frac{1}{4}|v|^2+\frac{1}{2}(1-\sgn(v)|v|^{2/p})^2,
\end{equation}
which is an $l_2$ regularization of $\frac{1}{2}(1-\sgn(v)|v|^{\frac{2}{p}})^2$.

\begin{figure}[!ht]
\centering
\includegraphics[width=.98\textwidth]{./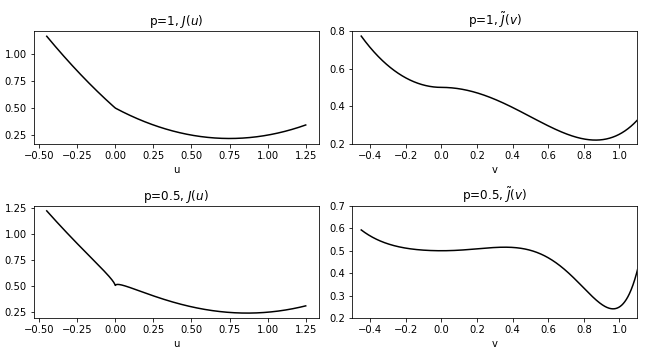}
\caption{Objective functions of \cref{eq:scalar} and \cref{eq:scalartransformed} for $p=1$ (first row) and $p=0.5$ (second row).}\label{fig:scalarobjective}
\end{figure}
For $p=1$, the first row of \cref{fig:scalarobjective} shows the objective functions of $l_p$ \cref{eq:scalar} and the transformed $l_2$ \cref{eq:scalartransformed} formulations. Each objective function has a unique global minimum without other local minima. The minimizers are $\frac{3}{4}$ and $\frac{\sqrt{3}}{2}$ for $l_1$ and $l_2$, respectively. We can check that the transformation does not add/remove local minimizers, but the convexity of the objective function changes. The transformed objective function $\tilde{J}$ has an inflection point at $u=0$, which is also a stationary point. Note that the original function has no other stationary points than the global minimizer.

When $p=0.5$, a potential issue of the transformation can be seen explicitly. The original objective and the transformed objective functions are shown in the second row of \cref{fig:scalarobjective}. Due to the regularization term with $p=0.5$, the objective functions are non-convex and have a local minimizer at $u=v=0$ in addition to the global minimizers. In the transformed formulation (bottom right of \cref{fig:scalarobjective}), the objective function flattens around $v=0$, which shows a potential issue of trapping ensemble members around $v=0$. Numerical experiments show that if the ensemble is initialized with a small variance, the ensemble is trapped around $v=0$.
On the other hand, if the ensemble is initialized with a sufficiently large variance (so that some of the ensemble members are initialized out of the well around $v=0$), $l_p$EKI shows convergence to the true minimizer, $v=0.9304$ (or $u=0.8656$) even when it is initialized around $0$.

\begin{figure}[!ht]
\centering
\includegraphics[width=0.95\textwidth]{./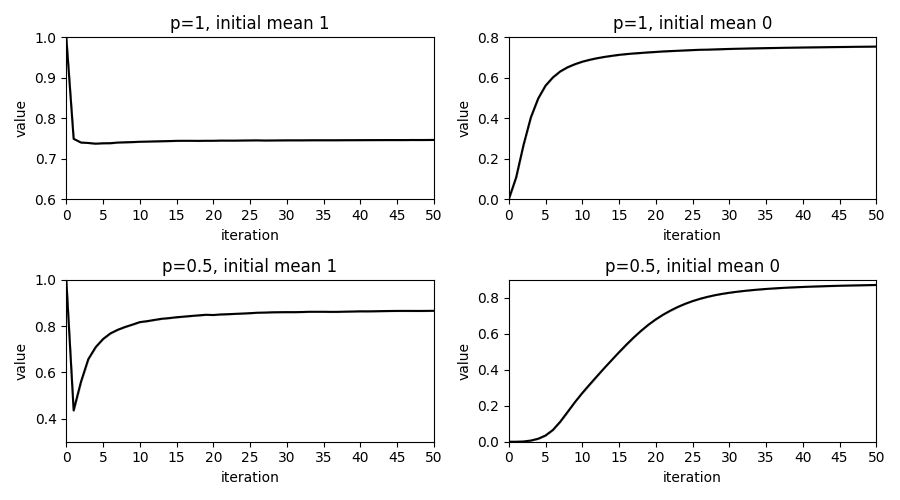}
\caption{Time series of $l_p$EKI estimate, $\xi(\overline{v}_n)$, which is averaged over 100 different trials. }
\label{fig:scalartimeseries}
\end{figure}
We use 100 different realizations for the ensemble initialization and each trial uses 50 ensemble members. The estimates at each iteration, which is averaged over different trials, are shown in \cref{fig:scalartimeseries}.
For $p=1$ (first row) and $p=0.5$ (second row), the left and right columns show the results when the ensemble is initialized with mean 1 and 0, respectively. When $p=1$ and initialized around $1$, the ensemble estimate quickly converges to the true value $0.75$ as the objective function is convex, and the initial guess is close to the true value. When $p=0.5$, as the objective function is non-convex due to the regularization term, the convergence is slower than the $p=1$ case. When the ensemble is initialized around 0 for $p=0.5$, a local minimizer, the ensemble needs to be initialized with a large variance. Using variance 1, which is 10 times larger than 0.1, the variance for the ensemble initialization around 1, $l_p$EKI converges to the true value. The performance difference between different trials is marginal. The standard deviations of the estimate after 50 iterations are $6.62\times 10^{-3}$ ($p=1$ initialized with 1), $7.95\times 10^{-3}$ ($p=1$ initialized with 0), $8.79\times 10^{-3}$ ($p=0.5$ initialized with 1), and $1.14\times 10^{-2}$ ($p=0.5$ initialized with 0). As a reference, the estimate using the transformation \cref{eq:l1rto} based on matching the densities of random variables converges to 0.71.


%
%
\subsection{Compressive sensing}
The second test is a compressive sensing problem. The true signal $u$ is a vector in $\mathbb{R}^{200}$, which is sparse with only four randomly selected non-zero components (their magnitudes are also randomly chosen from the standard normal distribution.)
The forward model $G:\mathbb{R}^{200}\to\mathbb{R}^{20}$ is a random Gaussian matrix of size $20\times 200$, which yields a measurement vector in $\mathbb{R}^{20}$. The measurement $y$ is obtained by applying the forward model to the true signal $u$ polluted by Gaussian noise with mean zero and variance $0.01$. As the forward model is linear, several robust methods can solve the sparse recovery problem, including the $l_1$ convex minimization method \cite{cvx}. This test aims to compare the performance of $l_p$EKI for various $p$ values, rather than to advocate the use of $l_p$EKI over other standard methods. As the forward model is linear and cheap to calculate, the standard methods are preferred over $l_p$EKI for this test.

We first use a large ensemble size, 2000 ensemble members, to run $l_p$EKI. The ensemble is initialized by drawing samples from a Gaussian distribution with mean zero and a diagonal covariance (which yields variance 0.1 for each component).
For $p=1$ and $0.7$, the tuned regularization coefficients, $\lambda$, are 100 and 300. When $p=2$, which corresponds to TEKI, the best result can be obtained using $\lambda$ ranging from 10 to 200; we use the result of $\lambda=50$ to compare with the other cases. For $p=1$, we also compare the result of the convex $l_1$ minimization method using the KKT solver in the Python library CVXOPT \cite{cvxopt}.
%

\begin{figure}[!ht]
\centering
\includegraphics[width=.98\textwidth]{./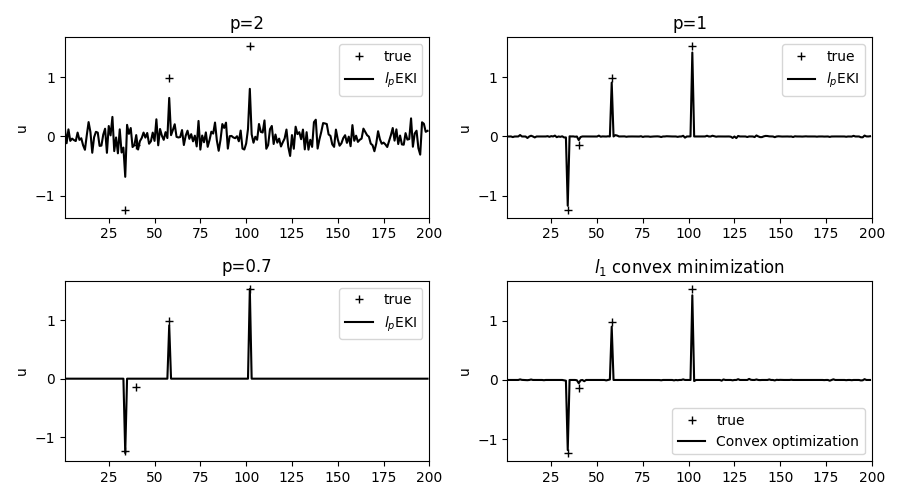}
\caption{Compressive sensing. Reconstruction of sparse signal using $l_p$EKI for $p$=2, 1, and 0.7. Ensemble size is 2000. The bottom right plot is the reconstruction using the convex $l_1$ minimization method. For the true signal, only the nonzero components are marked}.\label{fig:csestimate}
\end{figure}

\Cref{fig:csestimate} shows the $l_p$EKI estimates after 20 iterations averaged over 100 trials for $p=2$ (top left), $p=1$ (top right), and $p=0.7$ (bottom left), along with the estimate by the convex optimization (bottom right). 
As it is well known in compressive sensing, $l_2$ regularization fails to capture the true signal's sparse structure. As $p$ decreases to 1, $l_p$EKI develops sparsity in the estimate, comparable to the estimate of the convex $l_1$ minimization method. The slightly weak magnitudes of the three most significant components by $l_p$EKI improve as $p$ decreases to $0.7$. When $p=0.7$, $l_p$EKI captures the correct magnitudes at the cost of losing the smallest magnitude component. We note that the smallest magnitude component is difficult to capture; the magnitude is comparable to the measurement error $0.1=\sqrt{0.01}$.

\begin{table}[t!]
\centering
\begin{tabular}{|c|c|c|}
\hline
Method&$l_1$ error&data misfit\\
\hline
$p=2$, ens size 2000&14.0802&0.0515\\
$p=1$, ens size 2000&0.7848&0.8018\\
$p=0.7$, ens size 2000&0.2773&1.2737\\
$p=1$, ens size 50&1.6408&1.4095\\
$p=0.7$, ens size 50&0.6027&1.8958\\
$l_1$ convex minimization&0.5623&0.9030\\
\hline
\end{tabular}
\caption{Compressive sensing. $l_p$EKI estimate $l_1$ error and data misfit for $p=2,1$ and $0.7$. }\label{tb:csmisfitl1err}
\end{table}

\begin{figure}[!ht]
\centering
\includegraphics[width=.98\textwidth]{./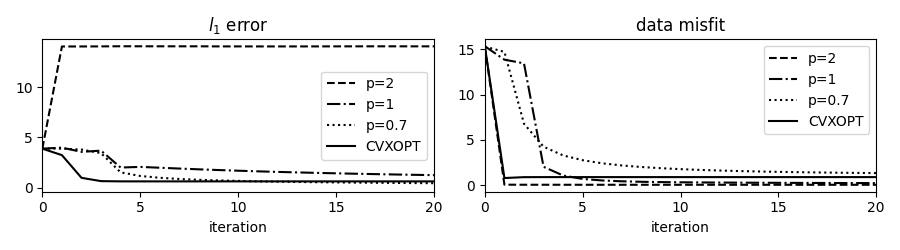}
\caption{Compressive sensing. $l_1$ error of the $l_p$EKI estimate and data misfit.}\label{fig:csmisfitl1err}
\end{figure}
Another cost of using $p<1$ to impose stronger sparsity than $p=1$ is a slow convergence rate of$l_p$EKI. The time series of the $l_1$ estimation error and the data misfit of $l_p$EKI averaged over 100 trials are shown in \cref{fig:csmisfitl1err} alongside those of the convex optimization method. The results show that $p=0.7$ converges slower than $p=1$ (see Table \cref{tb:csmisfitl1err} for the numerical values of the error and the misfit). Although there is a slowdown in convergence, it is worth noting that $l_p$EKI with $p=0.7$ converges in a reasonably short time, 15 iterations, to achieve the best result. $l_p$EKI with $p=2$ converges fast with the smallest data misfit. However, the $l_2$ regularization is not strong enough to impose sparsity in the estimate and yields the largest estimation error, which is 20 times larger than the case of $p=1$. Note that the convex optimization method has the fastest convergence rate; it converges within three iterations and captures the four nonzero components with slightly smaller magnitudes than $p=0.7$ for the three most significant components.

\begin{figure}[!ht]
\centering
\includegraphics[width=.98\textwidth]{./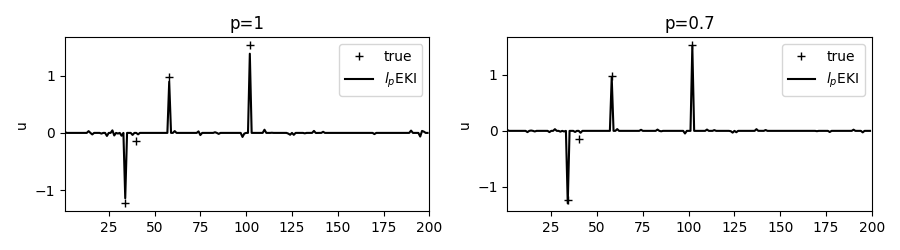}
\caption{Compressive sensing. Reconstruction of sparse signal using $l_p$EKI for p=1 and 0.7. Ensemble size is 50. For the true signal, only the nonzero components are marked.}\label{fig:csestimate_nsamp50}
\end{figure}
The ensemble size 2000 is larger than the dimension of the unknown vector $u$, 200. A large ensemble size can be impractical for a high-dimensional unknown vector. To see the applicability of $l_p$EKI using a small ensemble size, we use 50 ensemble members and two batches following the multiple batch approach \cite{batch}. The first batch runs 10 iterations, and all components whose magnitudes are less than 0.1 (the square root of the observation variance) are removed. The problem's size the second batch solves ranges from 30-45 (depending on a realization of the initial ensemble), which is then solved for another 10 iterations. The estimates using 50 ensemble members for $p=1$ and $p=0.7$ after two batch runs (i.e., 20 iterations) are shown in \cref{fig:csestimate_nsamp50}. Compared with the large ensemble size case, the small ensemble size run also captures the most significant components at the cost of fluctuating components larger than the large ensemble size test. We note that the estimates are averaged over 100 trials, and thus there are components whose magnitudes are less than the threshold value 0.1 used in the multiple batch run.

\begin{figure}[!ht]
\centering
\includegraphics[width=.98\textwidth]{./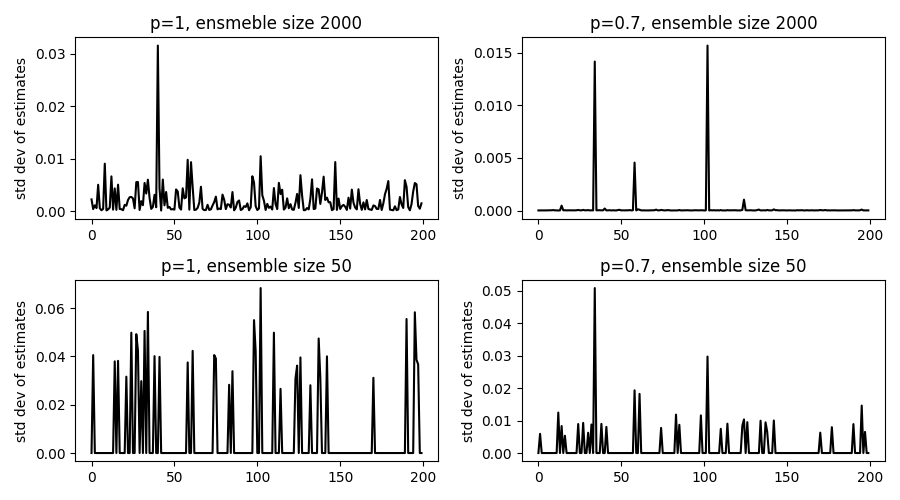}
\caption{Compressive sensing. Standard deviation of the estimates using 100 trials.}\label{fig:csstdrest}
\end{figure}

As a measure to check the performance difference for different trials, \cref{fig:csstdrest} shows the standard deviations of $l_p$EKI estimates for $p=1$ and $0.7$ after 20 iterations. The first row shows the results using 2000 ensemble members, while the second row shows the ones using 50 ensemble members. 
The standard deviations of the large ensemble size are smaller than those of the small ensemble size case as the large ensemble size has a smaller sampling error. In all cases, the standard deviations are smaller than 6\% of the magnitude of the most significant components. In terms of $p$, the standard deviations of $p=0.7$ are smaller than those of $p=1$.

\subsection{2D elliptic problem}
Next, we consider an inverse problem where the forward model is given by an elliptic partial differential equation.
The model is related to subsurface flow described by Darcy flow in the two-dimensional unit square $(0,1)^2\subset\mathbb{R}^2$
\begin{equation}\label{eq:2d}
-\nabla \cdot (k(x)\nabla p(x)) = f(x), \quad x=(x_1,x_2)\in(0,1)^2.
\end{equation}
The scalar field $k(x)>\alpha>0$ is the permeability, and another field $p(x)$ is the piezometric head or the pressure field of the flow. For a known source term $f(x)$, the inverse problem estimates the permeability from measurements of the pressure field $p$. This model is a standard model for an inverse problem in oil reservoir simulations and has been actively used to measure EKI's performance and its variants, including TEKI \cite{EKI, TEKI}.

We follow the same setting used in TEKI \cite{TEKI} for the boundary conditions and the source term.
The boundary conditions consist of Dirichlet and Neumann boundary conditions
\[p(x_1,0)=100,  \frac{\partial p}{\partial x_1}(1,x_2)=0, -k\frac{\partial p}{\partial x_1}(0,x_2)=500,  \frac{\partial p}{\partial x_2}(x_1,1)=0,\]
and the source term is piecewise constant
\[\displaystyle f(x_1,x_2)=\left\{
\begin{array}{ll}
0&\mbox{if } 0\leq x_2\leq \frac{4}{6},\\
137&\mbox{if }\frac{4}{6}< x_2\leq \frac{5}{6},\\
274&\mbox{if }\frac{5}{6}<x_2\leq 1.
\end{array}\right.\]
A physical motivation of the above configuration can be found in \cite{Darcy}.
We use $15\times 15$ regularly spaced points in $(0,1)^2$ to measure the pressure field with a small measurement error variance $10^{{-6}}$. For a given $k$, the forward model is solved by a FEM method using the second-order polynomial basis on a $60\times 60$ uniform mesh.

In addition to the standard setup, we impose a sparse structure in the permeability. We assume that the
log permeability, $\log k$, can be represented by 400 components in the cosine basis $\phi_{ij}=\cos(i\pi x_1)\cos(j\pi x_2), i,j=0,1,...,19,$
\begin{equation}\log k(x)=\sum_{i,j=0}^{19}u_{ij}\phi_{ij}(x),\end{equation}
where only six of $\{u_{ij}\}$ are nonzero. That is, we assume that the discrete cosine transform of $\log k$ is sparse with only 6 nonzero components out of 400 components.
Thus, the problem we consider here can be formulated as an inverse problem to recover $u=\{u_{ij}\}\in\mathbb{R}^{400}$ (which has only six nonzero components) from a measurement $y\in\mathbb{R}^{225}$, the measurement of $p$ at $15\times 15$ regularly spaced points.
In terms of sparsity reconstruction, the current setup is similar to the previous compressive sensing problem, but the main difference lies in the forward model.
In this test, the forward model is nonlinear and computationally expensive to solve, where the forward model in the compressive sensing test was linear using a random measurement matrix.

For this test, we run $l_p$EKI using only a small ensemble size due to the high computational cost of running the forward model. As in the previous test, we use the multiple batch approach. First, the $l_p$EKI ensemble of size 50 is initialized around zero with Gaussian perturbations of variance 0.1. After the first five iterations, all components whose magnitudes less than $5\times 10^{-3}$ are removed at each iteration. The threshold value is slightly smaller than the smallest magnitude component of the true signal. Over 100 different trials, the average number of nonzero components after 30 iterations is 18 that is smaller than the ensemble size.

\begin{figure}[!htp]
\centering
\subfloat[true]{\includegraphics[width=.98\textwidth]{./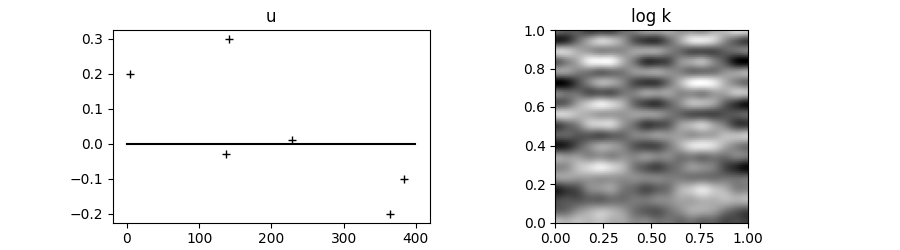}}\\
\subfloat[$p=2$]{\includegraphics[width=.98\textwidth]{./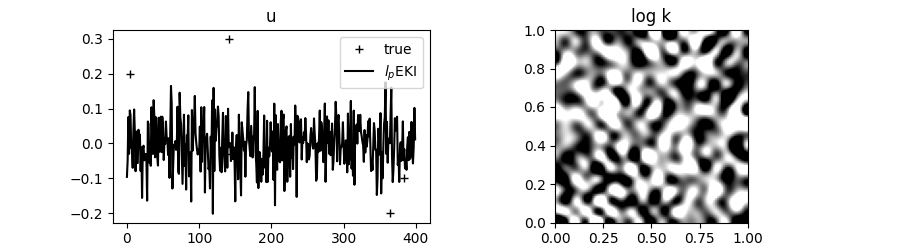}}\\
\subfloat[$p=1$]{\includegraphics[width=.98\textwidth]{./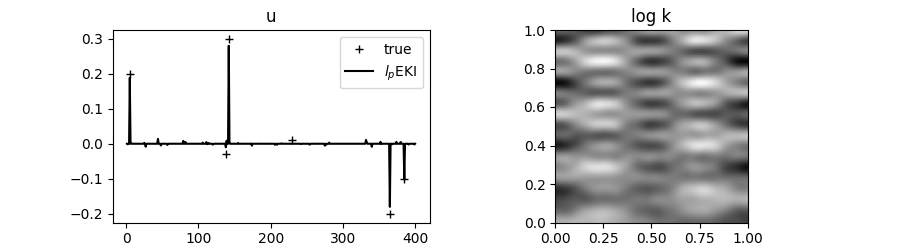}}\\
\subfloat[$p=0.8$]{\includegraphics[width=.98\textwidth]{./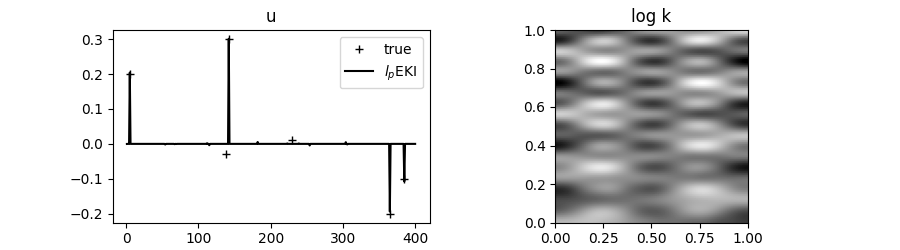}}
\caption{2D elliptic problem. Left column: the true $u$ and $l_p$EKI  estimates for $p=2, 1,$ and $0.8$. Right column: $\log k$ of the true and $l_p$EKI estimates. All plots have the same grey scale. $p=1$ and $0.8$ use the results after 20 iterations while $p=2$ uses the result after 50 iterations. For the true signal, only the nonzero components are marked.}\label{fig:2destimates}
\end{figure}

The true value of $u$ used in this test and its corresponding log permeability, $\log k$, are shown in the first row of \cref{fig:2destimates} ($u$ is represented as a one-dimensional vector by concatenating the row vectors of $\{u_{ij}\}$).
The $l_p$EKI estimates for $p=2,1$, and $0.8$ are shown in the second to the fourth rows of \cref{fig:2destimates}. Here $p=0.8$ was the smallest value we can use for $l_p$EKI due to the numerical overflow in the exponentiation of $\log k$. A smaller $p$ can be used with a smaller variance for ensemble initialization, but the gain is marginal.
The results of $l_p$EKI are similar to the compressive sensing case.
$p=0.8$ has the best performance recovering the four most significant components of $u$.
$p=1$ has slightly weak magnitudes missing the correct magnitudes of the two most significant components (corresponding to one-dimensional indices 141 and 364).
Both cases converge within 20 iterations to yield the best result (see \cref{fig:2dmisfitl1err} and Table \cref{tb:2dmisfitl1err} for the time series and numerical values of the $l_1$ error and data misfit). When $p=2$, $l_p$EKI performs the worst; it has the largest $l_1$ error and data misfit. We note that $p=2$ uses the result after running 50 iterations at which the estimate converges.

\begin{figure}[t!]
\centering
\includegraphics[width=.98\textwidth]{./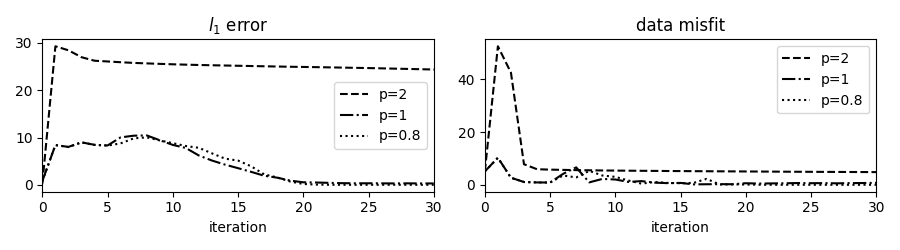}
\caption{2D elliptic problem. $l_1$ error of the $l_p$EKI estimates and data misfit.}\label{fig:2dmisfitl1err}
\end{figure}
\begin{figure}[htp]
\centering
\includegraphics[width=.98\textwidth]{./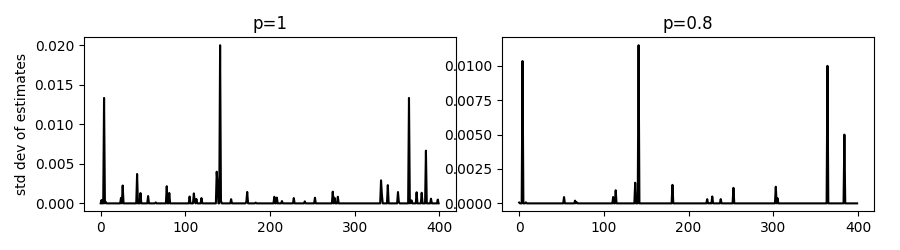}
\caption{2D elliptic problem. Standard deviation of the estimates using 100 trials.}\label{fig:2dstd}
\end{figure}
The performance difference between different trials is not significant. The standard deviations of the $l_p$EKI estimates using 100 trials are shown in \cref{fig:2dstd}. The standard deviations for nonzero components are larger than the other components, but the largest standard deviation is less than 3\% of the magnitude of the true signal. As in the compressive sensing test, the deviations are slightly smaller for $p<1$ than $p=1$.

\begin{table}[t!]
\centering
\begin{tabular}{|c|c|c|}
\hline
$p$&$l_1$ error&data misfit\\
\hline
2&21.3389&4.1227\\
1&0.1553&0.5707\\
0.8&0.0719&0.5682\\
\hline
\end{tabular}
\caption{2D elliptic problem. $l_p$EKI estimate $l_1$ error and data misfit for $p=2,1$ and $0.8$. }\label{tb:2dmisfitl1err}
\end{table}

\section{Discussions and conclusions}\label{sec:conclusion}
We have proposed a strategy to implement $l_p, 0<p\leq 1$, regularization in ensemble Kalman inversion (EKI) to recover sparse structures in the solution of an inverse problem.
The $l_p$-regularized ensemble Kalman inversion ($l_p$EKI) proposed here uses a transformation to convert the $l_p$ regularization to the $l_2$ regularization, which is then solved by the standard EKI with an augmented measurement model used in Tikhonov EKI. We showed a one-to-one correspondence between the local minima of the original and the transformed formulations. Thus a local minimum of the original problem can be obtained by finding a local minimum of the transformed problem.
As other iterative methods for non-convex problems, initialization plays a vital role in the proposed method's performance.
The effectiveness and robustness of regularized EKI are validated through a suite of numerical tests, showing robust results in recovering sparse solutions using $p\leq1$.

In implementing $l_p$ regularization for EKI, there is a limit for $p<1$ due to an overflow. One possible workaround is to use a nonlinear augmented measurement model related to the transformation $\Psi$, not the transformation $\Xi$. The nonlinear measurement model is general to incorporate the $l_p$ regularization term directly instead of using the transformed $l_2$ problem. However, this approach lacks a mathematical framework to prevent the inadvertent addition of local minima. This approach is under investigation and will be reported in another place.

 For successful applications of $l_p$EKI for high-dimensional inverse problems, it is essential to maintain a small ensemble size for efficiency. In the current study, we considered the multiple batch approach. The approach removes non-significant components after each batch, and thus the problem size (i.e., the dimension of the unknown signal) decreases over different batch runs. This approach enabled $l_p$EKI to use only 50 ensemble members to solve 200 and 400-dimensional inverse problems. Other techniques, such as variance inflation and localization, improve the performance of the standard EKI using a small ensemble size \cite{EKIanalysis}. It would be natural to investigate if these techniques can be extended to $l_p$EKI to decrease the sampling error of $l_p$EKI.  
 
In the current study, we have left several variants of $l_p$EKI for future work. Weighted $l_1$ has been shown to recover sparse solutions using fewer measurements than the standard $l_1$ \cite{weightedl1}. It is straightforward to implement weighted $l_1$ (and further weighted $l_p$ for $p<1$) in $l_p$EKI by replacing the identity matrix in \cref{eq:augmentedcovariance} with another type of covariance matrix corresponding to the desired weights. We plan to study several weighting strategies to improve the performance of$l_p$EKI. As another variant of$l_p$EKI, we plan to investigate the adaptive time-stepping under the continuous limit. The time step for solving the continuous limit equation, which is called `learning rate' in the machine learning community, is known to affect an optimization solver \cite{adaptivelearningrate}. The standard ensemble Kaman inversion has been applied to machine learning tasks, such as discovering the vector fields defining a differential equation, using time series data \cite{MLEKI} and sparse learning using thresholding \cite{sparseEKI}. We plan to investigate the effect of an adaptive time-stepping for performance improvements and compare with the sparsity EKI method using thresholding in dimension reduction in machine learning.


\bibliographystyle{siamplain}
\bibliography{regularizedEKI}
\end{document}